\newtheorem{thm}{Theorem}
\newtheorem*{remark}{Remark}
\newtheorem{prop}[thm]{Proposition}
\newtheorem{lemma}[thm]{Lemma}
\renewcommand{\P}{\mathbf P}
\newcommand{\E}{\mathbf E}
\renewcommand{\d}{\mathrm d}
\newcommand{\1}{^{(1)}}
\newcommand{\2}{^{(2)}}
\newcommand{\3}{^{(3)}}
\newcommand{\wt}{\widetilde}
\newcommand{\I}{\mathbbm{1}}
\renewcommand{\O}{\mathcal O}
\DeclareMathOperator*{\PD}{PD}
\begin{document}

\author{R\'eka Szab\'o}
\address{University of Groningen, Nijenborgh 9, 9747 AG Groningen, The Netherlands}
\email{r.szabo@rug.nl}

\author{B\'alint Vet\H o}
\address{MTA--BME Stochastics Research Group, Egry J.\ u.\ 1, 1111 Budapest, Hungary}
\email{vetob@math.bme.hu}

\title{Ages of records in random walks}
\dedicatory{Dedicated to B\'alint T\'oth on the occasion of his $60$th birthday}

\begin{abstract}
We consider random walks with continuous and symmetric step distributions.
We prove universal asymptotics for the average proportion of the age of the $k$th longest lasting record for $k=1,2,\dots$
and for the probability that the record of the $k$th longest age is broken at step $n$.
Due to the relation to the Chinese restaurant process, the ranked sequence of proportions of ages converges to the Poisson--Dirichlet distribution.
\end{abstract}

\subjclass[2010]{60K05, 60G50}

\keywords{longest lasting records, random walk, asymptotic average proportion, Laplace transform, renewal process, Poisson--Dirichlet distribution}

\maketitle

\section{Introduction}

Recently there has been an increasing interest in the study of record statistics, since this field has found several applications in natural sciences, in particular in finance.
If the discrete time series $x_1, x_2, \dots$ is considered, then we say that a record event happens at time $k$, if $x_k$ is larger than all the previous values in the series.
The record statistics of a time series can be characterized with several variables, such as the time up to which a record survives, the number of records that occur up to time $n$,
the age of the longest lasting record, or the probability that a record event occurs at any given time.
Depending on the exact definition of the model, these variables exhibit interesting behavior.

In this paper we study the record statistics of the random walk where the jumps form an i.i.d. sequence of random variables drawn from a continuous distribution.
A remarkable feature of the record statistics of this model is their universal behaviour,
i.e.\ they do not depend on the details of the jump distribution as long as it is continuous and symmetric \cite{MZ08}.
All these results have their roots in the Sparre Andersen theorem \cite{S53}.

The statistics of the longest lasting record after $n$ steps were first examined in~\cite{MZ08} in the case of random walks with arbitrary symmetric and continuous distribution of the jumps.
The asymptotic expected proportion of the longest lasting record was computed.
Later in~\cite{GMS14} apart from the expected value of the age of the longest lasting record another variable was considered: the probability, that the last record is the longest lasting one.
It was observed that the statistics of the records is sensitive to the exact definition of the length of the last record event.
Three different natural ways were given to define the length of the last record up to time $n$,
and the universal behaviour of the longest lasting record was described in each case.
Some of the constants characterizing the record statistics appeared before in the literature.
The constant in the first case appeared first in~\cite{PY97} as the expected first coordinate of the Poisson--Dirichlet distribution and in~\cite{MZ08} in the context of record ages.
The constant in the second case appeared first in the context of the excursion lengths of a standard Brownian motion \cite{S95}.
These constants also appeared in the study of fractional Brownian motion later in \cite{GMS09, GRS10}. 

The present paper is an extension of the work presented in~\cite{GMS14}.
We consider the full sequence of the $k$th longest lasting records for $k=1,2,\dots$ and by introducing new constants,
we describe their average asymptotic proportions and the limit of the probability that the record of the $k$th longest age is broken at step $n$ for each $k$ as $n\to\infty$.
Some of these constants appeared before in~\cite{S95} and in~\cite{F08} where the probability that the last excursion is the $k$th longest lasting one in a standard Brownian motion was computed.
These results were later extended for general renewal processes \cite{GMS15}.
The corresponding observables for random walk excursions were studied in the recent work \cite{G16}.

The theory of exchangeable partitions was developed in a series of papers by Pitman and coauthors, see also \cite{P02} and references therein.
Based on this theory, the partition generated by the lengths of the record ages in one of the cases studied in the present paper is equal in law to the Chinese restaurant process at time $n$.
As a consequence, the ranked sequence of ages of records rescaled by the total length converges to the Poisson--Dirichlet distribution \cite{P93,PPY92,PY92} with parameter $\alpha=1/2$.
This distribution is also related to the ranked lengths of excursions in diffusion processes \cite{PY97} and to renewal processes \cite{B15}.
The Poisson--Dirichlet distribution with parameter $\alpha=1/2$ was proved to be the limiting distribution of ranked excursion lengths up to time $n$ of the simple symmetric random walk \cite{CH03}.

The paper is organized as follows.
In Section~\ref{s:results}, we state our main result on the asymptotic behaviour of the Laplace transforms of two types of quantities:
the average proportions of the ages of the longest lasting records and the probabilities that the record of the $k$th longest age is broken at step $n$.
In two of the three cases, we can turn these asymptotics into a proof of convergence of the average proportions and in one case to the convergence of the record breaking probabilities by specific arguments.
Then based on the relation to the Chinese restaurant process, we conclude a limit theorem about the rescaled ranked sequence of ages.
The proof of the asymptotics of Laplace transforms is postponed to Section~\ref{s:proof}.

\section{Results}\label{s:results}

Let the sequence $\eta_n$ for $n=1,2,\dots$ consist of i.i.d.\ random variables
from a common symmetric and absolutely continuous distribution.
We consider the random walk $X_0=0$ and $X_n=X_{n-1}+\eta_n$ for $n=1,2,\dots$.
A record event occurs at step $n$, if the random walk takes a value at time $n$ which is larger than all the previous values of the random walk,
that is, $X_n>\max(X_0,\dots,X_{n-1})$ with the convention that the first record occurs at time $0$.

Let $t_1=0$ and recursively define
\begin{equation}
t_k=t_{k-1}+\min\{\tau>0:X_{t_{k-1}+\tau}>X_{t_{k-1}}\}
\end{equation}
for $k=2,3,\dots$ which gives the time when the $k$th record occurs.
The age of the $k$th record is
\begin{equation}
\tau_k=t_{k+1}-t_k
\end{equation}
for $k=1,2,\dots$ which form a sequence of i.i.d.\ random variables by construction.
Let $R_n$ be the number of records until time $n$, i.e.\ define $R_n=m$ if and only if $t_m\le n<t_{m+1}$.
By the i.i.d.\ structure of the ages of records $(\tau_k)_{k=1}^\infty$, $R_n$ becomes a renewal process.
Let $A_n$ denote the current age of the renewal process at time $n$, that is,
$A_n=n-t_m$ if $R_n=m$.
For an illustration, see Figure~\ref{fig:rw}.

\begin{figure}
\begin{center}
\def\svgwidth{250pt}
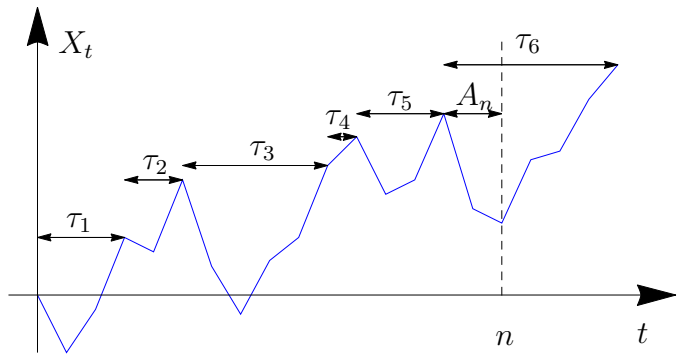\qquad
\caption{A random walk trajectory with $R_n=6$, the age of the last record after $n$ steps is $A_n$, $\tau_6$ or $\tau_5$.\label{fig:rw}}
\end{center}
\end{figure}

There are three different natural choices for the set of ages which one considers.
Given that $R_n=m$, the sequences of ages after $n$ steps in the different cases are given by
\begin{align}
\mathcal{A}\1_{n, m}&=(\tau_1, \tau_2, \dots, \tau_{m-1}, A_n),\label{defA1}\\
\mathcal{A}\2_{n, m}&=(\tau_1, \tau_2, \dots, \tau_{m}),\\
\mathcal{A}\3_{n, m}&=(\tau_1, \tau_2, \dots, \tau_{m-1})
\end{align}
where multiple occurrences are considered with multiplicities.
Now we define the two main quantities of interest.
Given the sequence $\mathcal A^{(\beta)}_{n,m}$ for $\beta=1,2,3$,
let $L_k^{(\beta)}(n)$ denote the $k$th largest element of $\mathcal A^{(\beta)}_{n,m}$ .
Further, let $p_k^{(\beta)}(n)$ denote the probability that the last element of $\mathcal A^{(\beta)}_{n,m}$ is the $k$th longest one of the sequence.
In particular,
\begin{align}
p_1\1(n)&=\P(A_n\ge\max(\tau_1,\tau_2,\dots,\tau_{m-1})),\\
p_1\2(n)&=\P(\tau_{m}\ge\max(\tau_1,\tau_2,\dots,\tau_{m-1})),\\
p_1\3(n)&=\P(\tau_{m-1}\ge\max(\tau_1,\tau_2,\dots,\tau_{m-2})).
\end{align}

On the example of Figure~\ref{fig:rw}, the sequence of the ages of records in the three cases are
$\mathcal A_{16,6}\1=(3,2,5,1,3,2)$, $\mathcal A_{16,6}\2=(3,2,5,1,3,6)$ and $\mathcal A_{16,6}\3=(3,2,5,1,3)$.
The corresponding values of $L_k^{(\beta)}(16)$ are $L_1\1(16)=5$, $L_2\1(16)=3$, $L_3\1(16)=3$, $L_4\1(16)=2$, $L_5\1(16)=2$ and $L_6\1(16)=1$ for $\beta=1$.

We introduce the distribution of the ages of records
\begin{equation}
f(k)=\P(\tau_1=k)=\P(X_k>X_0\ge\max(X_1,\dots,X_{k-1}))
\end{equation}
for $k=1,2,\dots$ and $f(0)=0$, i.e.\ $f(k)$ is the probability that the random walk crosses its starting point between steps $k-1$ and $k$.
Further, let
\begin{equation}
q(k)=\P(\tau_1>k)=\P(\max(X_1,\dots,X_k)<X_0)
\end{equation}
for $k=0,1,\dots$, in particular $q(0)=1$, that is the probability that there is no record until time $k$.
One clearly has
\begin{equation}\label{eq:fk}
f(k)=q(k-1)-q(k)
\end{equation}
by definition.
The distribution of the ages of records is known by the classical Sparre Andersen theorem \cite{S53} which can also be found in Section XII.7 of~\cite{F71}.
\begin{thm}\label{thm:sparre}
Suppose that the step distribution of the random walk $X_n$ is continuous and symmetric.
Then the distribution and the generating function of the ages of records are given by
\begin{align}
q(k)&=\frac1{2^{2k}}\binom{2k}k,& f(k)&=\frac1{2^{2k-1}k}\binom{2k-2}{k-1},\label{defqf}\\
\wt q(z)=\sum_{k\ge0} q(k)z^k&=\frac1{\sqrt{1-z}},& \wt f(z)=\sum_{k\ge1} f(k)z^k&=1-\sqrt{1-z}.\label{qfgenfunction}
\end{align}
\end{thm}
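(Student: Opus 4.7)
The plan is to derive the formula for $q(k)$ via the classical Sparre Andersen combinatorial identity and then obtain everything else by elementary manipulation. First, because the step distribution is continuous, ties have probability zero, so we can rewrite
\begin{equation*}
q(k) = \P(S_1 < 0, S_2 < 0, \dots, S_k < 0),
\end{equation*}
where $S_i = X_i - X_0 = \eta_1 + \cdots + \eta_i$. Equivalently, $q(k)$ is the probability that the first strict ascending ladder epoch of the walk exceeds $k$.

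The main step is to invoke Sparre Andersen's theorem (stated e.g.\ in Feller, Vol.~II, XII.7): for i.i.d.\ continuous symmetric random variables, the number $\rho_k=|\{1\le i\le k:S_i>0\}|$ of strictly positive partial sums obeys the discrete arcsine law $\P(\rho_k=j)=\binom{2j}{j}\binom{2k-2j}{k-j}2^{-2k}$. Taking $j=0$ immediately gives $q(k)=\binom{2k}{k}2^{-2k}$. A faster equivalent route is the Sparre Andersen generating-function identity
\begin{equation*}
\wt q(z)=\exp\!\Big(\sum_{k\ge1}\frac{z^k}{k}\P(S_k<0)\Big),
\end{equation*}
combined with $\P(S_k<0)=1/2$ (forced by symmetry together with continuity, which excludes the atom at $0$). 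Then $\wt q(z)=\exp(-\tfrac12\log(1-z))=(1-z)^{-1/2}$ in one line.

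Once either $q(k)$ or $\wt q(z)$ is in hand, the rest is routine bookkeeping. The closed form and the generating function for $q$ match because $\sum_{k\ge0}\binom{2k}{k}z^k/4^k=(1-z)^{-1/2}$ by Newton's binomial series. The closed form for $f(k)$ follows from $f(k)=q(k-1)-q(k)$ together with the elementary identity $\binom{2k}{k}=\frac{2(2k-1)}{k}\binom{2k-2}{k-1}$, which collapses the difference into $\frac{1}{2^{2k-1}k}\binom{2k-2}{k-1}$. Finally,
\begin{equation*}
\wt f(z)=\sum_{k\ge1}\bigl(q(k-1)-q(k)\bigr)z^k=z\wt q(z)-\bigl(\wt q(z)-q(0)\bigr)=1-(1-z)\wt q(z)=1-\sqrt{1-z}.
\end{equation*}

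The only genuinely nontrivial ingredient is the Sparre Andersen identity itself; everything downstream is algebra, so the difficulty of the entire statement is concentrated in that one step. Since the paper explicitly attributes the result to Sparre Andersen and refers the reader to Feller, I would not reprove it here but simply cite it; if pressed to give a self-contained argument I would use the generating-function version, as reducing the problem to the elementary evaluation $\P(S_k<0)=1/2$ bypasses the combinatorial arcsine computation completely.
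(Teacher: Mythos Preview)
Your proposal is correct. Note that the paper does not actually prove Theorem~\ref{thm:sparre}: it is stated as a classical result, attributed to Sparre Andersen \cite{S53} and to Feller \cite{F71}, Section~XII.7, and used without further argument. Your write-up therefore goes beyond the paper by supplying the derivation, and you already anticipated this (``Since the paper explicitly attributes the result to Sparre Andersen and refers the reader to Feller, I would not reprove it here but simply cite it''). The algebra you give for passing from $q(k)$ to $f(k)$ and from $\wt q(z)$ to $\wt f(z)$ is correct and matches the relations the paper uses downstream (in particular \eqref{eq:fk} and \eqref{qfgenfunction}). The generating-function route via $\P(S_k<0)=1/2$ is a clean shortcut; either it or the arcsine-law version would be an acceptable justification if one were asked to expand the citation.
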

As a consequence, one has the following asymptotics by the Stirling formula:
\begin{equation}\label{qfasymp}
q(k)\sim\frac1{\sqrt{\pi k}},\qquad f(k)\sim\frac1{2\sqrt\pi k^{3/2}}
\end{equation}
where the symbol $\sim$ means that the ratio of the two sides converges to $1$ as $k\to\infty$.

\begin{remark}
By Theorem~\ref{thm:sparre}, the distribution of the ages of records does not depend on the step distribution of the random walk
as long as the latter is continuous and symmetric.
Therefore, instead of the random walk description, the problem of the record age statistics can be posed as a question on renewal processes
where the holding times are distributed according to \eqref{defqf}.

Furthermore, since only the sizes and not the ordering of the renewal intervals matters in what follows,
it is enough to consider the partition of $n$ generated by the ages of records in the random walk up to time $n$.
This partition in the first case which corresponds to \eqref{defA1} equals in law to the Chinese restaurant process at time $n$, see Subsection \ref{ss:chinese}.
\end{remark}

One naturally expects also based on numerical evidence that the asymptotic proportions of the ages of the longest records $\E(L_k^{(\beta)}(n))/n$
and the probabilities $p_k^{(\beta)}(n)$ converge as $n\to\infty$ for fixed $k$ and $\beta$.
We can only prove the convergence of $\E(L_k\1(n))/n$, $\E(L_k\2(n))/n$ and $p_k\1(n)$, but we identify the limits in all the cases under the assumption of convergence.

\subsection{Limiting rank of the last interval and limiting proportions of ages of records}

In order to state the main result of the present paper, we define the constants
\begin{align}
p_k\1&=\frac1{2^{k-1}}\int_0^\infty\frac{x^{-1/2}e^{-x}\Gamma(-1/2,x)^{k-1}}{(x^{-1/2}e^{-x}+\gamma(1/2,x))^k}\,\d x,\label{defp1}\\
p_k\2&=\frac1{2^k}\int_0^\infty\frac{x^{-3/2}(1-e^{-x})\Gamma(-1/2,x)^{k-1}}{(x^{-1/2}e^{-x}+\gamma(1/2,x))^k}\,\d x,\label{defp2}\\
C_k\1&=p_k\1,\label{defC1}\\
C_k\2&=\frac1{2^{k-1}}\int_0^\infty\frac{x^{-1/2}\Gamma(-1/2,x)^{k-1}}{\left(x^{-1/2}e^{-x}+\gamma(1/2,x)\right)^k}\,\d x,\label{defC2}\\
C_k\3&=\frac1{2^k}\int_0^\infty\frac{\Gamma(-1/2,x)^k}{\left(x^{-1/2}e^{-x}+\gamma(1/2,x)\right)^k}\,\d x\label{defC3}
\end{align}
where
\begin{align}
\Gamma(\nu,x)&=\int_x^\infty e^{-t}t^{\nu-1}\,\d t,\\
\gamma(\nu,x)&=\int_0^x e^{-t}t^{\nu-1}\,\d t
\end{align}
are the upper and lower incomplete gamma functions.

\begin{thm}\label{thm:main}
For the Laplace transforms of the probabilities $p_k^{(\beta)}$, for $\beta=1,2$, one has
\begin{equation}\label{pLaplace:asymp}
\sum_{n\ge0}p_k^{(\beta)}(n)e^{-sn}\sim\frac{p_k^{(\beta)}}{s}
\end{equation}
and
\begin{equation}\label{pLaplace:asymp3}
\sum_{n\ge0}p_k\3(n)e^{-sn}\sim\frac1{\sqrt s}\ln\frac1{\sqrt s}
\end{equation}
as $s\to0$.
As a consequence, $p_k\1(n)\to p_k\1$ and if $p_k\2(n)$ converges as $n\to\infty$, then the limit is $p_k\2$.
If $\sqrt n p_k\3(n)/\ln n$ converges, then the limit is $1/(2\sqrt\pi)$, that is
\begin{equation}
p_k\3(n)\sim\frac{\ln n}{2\sqrt{\pi n}}.
\end{equation}

Further, one has the following asymptotics for the Laplace transforms of the expected ages of the longest lasting records
\begin{equation}\label{LLaplace:asymp}
\sum_{n\ge0}\E(L_k^{(\beta)}(n))e^{-sn}\sim\frac{C_k^{(\beta)}}{s^2}
\end{equation}
as $s\to0$ with the constants \eqref{defC1}--\eqref{defC3} for $k=1,2,\dots$ and $\beta=1,2,3$.
Consequently
\begin{equation}\label{L/nconvergence}
\frac{\E(L_k^{(\beta)}(n))}n\to C_k^{(\beta)}
\end{equation}
as $n\to\infty$ for $k=1,2,\dots$ and for $\beta=1,2$.
If the sequence $\E(L_k\3(n))/n$ converges as $n\to\infty$, then the limit is $C_k\3$.
\end{thm}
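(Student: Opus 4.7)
The plan is to derive exact expressions for the Laplace transforms on the left-hand sides of \eqref{pLaplace:asymp}, \eqref{pLaplace:asymp3} and \eqref{LLaplace:asymp} from the renewal description, analyze them as $s\to 0$ using Theorem~\ref{thm:sparre} and \eqref{qfasymp}, and finally pass from Laplace asymptotics to sequence convergence via Tauberian arguments and, in the case of $p_k\1(n)$, the Chinese restaurant process structure mentioned in the introduction.

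For the exact formula in case $\beta=1$, conditioning on $R_n=m$ and on the value $a$ of $A_n$ gives the joint weight $f(\tau_1)\cdots f(\tau_{m-1})q(a)\,\I(\tau_1+\cdots+\tau_{m-1}+a=n)$. The event that $A_n$ is the $k$th largest element of $\mathcal A\1_{n,m}$ splits by the $\binom{m-1}{k-1}$ symmetric choices of indices $i$ with $\tau_i>a$; using $\sum_{j\ge0}\binom{j+k-1}{k-1}\psi^j=(1-\psi)^{-k}$ to sum over $m$ yields
\begin{equation*}
\sum_{n\ge0}p_k\1(n)e^{-sn}=\sum_{a\ge0}\frac{\phi(s,a)^{k-1}q(a)e^{-sa}}{(1-\psi(s,a))^k},
\end{equation*}
where $\phi(s,a)=\sum_{\tau>a}f(\tau)e^{-s\tau}$ and $1-\psi(s,a)=\sqrt{1-e^{-s}}+\phi(s,a)$ by Theorem~\ref{thm:sparre}. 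Parallel closed forms are written down for $p_k^{(\beta)}(n)$ with $\beta=2,3$ (with $\tau_m$, respectively $\tau_{m-1}$, as the distinguished element) and, via $\E L_k^{(\beta)}(n)=\sum_{a\ge1}\P(L_k^{(\beta)}(n)\ge a)$, for the expected longest ages.

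The core asymptotic step uses the scaling $a=x/s$. Substituting \eqref{qfasymp} gives $q(a)e^{-sa}\sim\sqrt{s}\,x^{-1/2}e^{-x}/\sqrt\pi$ and $\phi(s,a)\sim\sqrt{s}\,\Gamma(-1/2,x)/(2\sqrt\pi)$, and rewriting $1-\psi$ by means of the identity $\Gamma(-1/2,x)=2x^{-1/2}e^{-x}-2\Gamma(1/2,x)$ together with $\Gamma(1/2)=\sqrt\pi$ produces $1-\psi(s,a)\sim\sqrt{s}\,(x^{-1/2}e^{-x}+\gamma(1/2,x))/\sqrt\pi$. All powers of $s$ and $\pi$ cancel cleanly, leaving an overall factor of $1/s$ from $\Delta a=s\,\Delta x$, and the Riemann sum converges to the integral defining $p_k\1$; the same mechanism reproduces $p_k\2/s$ and $C_k^{(\beta)}/s^2$ in the remaining cases. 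In case $\beta=3$ for $p_k$ the prospective integrand fails to be integrable near $x=0$, but truncating at the natural lower cut-off $x\gtrsim s$ produces exactly the logarithmic factor in \eqref{pLaplace:asymp3}. The main obstacle here is the uniform control of $\phi(s,a)$, $\psi(s,a)$ and $q(a)$ across all $a$ (in particular at the two boundaries $a\to 0$ and $a\to\infty$) that is needed to apply dominated convergence and to justify the passage from the Riemann sum to the integral.

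For sequence convergence, $p_k\1(n)\to p_k\1$ follows from the identification of the age partition $\mathcal A\1_{n,m}$ with the Chinese restaurant process $\mathrm{CRP}(1/2,0)$ mentioned in the introduction: $p_k\1(n)$ is the probability that the block containing customer $n$ is the $k$th largest, which converges by size-biasing of the Poisson--Dirichlet limit. For $\E(L_k^{(\beta)}(n))/n$ with $\beta=1,2$, the bounds $0\le L_k^{(\beta)}(n)\le n$ together with control on the one-step increment (at a non-record step only $A_n$, respectively $\tau_m$, is affected) give enough regularity for a standard Karamata Tauberian argument to upgrade \eqref{LLaplace:asymp} to \eqref{L/nconvergence}. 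In the remaining cases this regularity is not available without further input, and only the conditional statements can be proved.
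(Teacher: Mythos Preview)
Your outline matches the paper's strategy closely: exact generating functions from the renewal description, the negative-binomial summation over $m$, the Riemann-sum limit under the rescaling $a=x/s$, the Chinese restaurant identification for $p_k\1(n)\to p_k\1$, and the logarithmic cutoff for $\beta=3$.

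Two steps are glossed over. For the Laplace transform of $\E(L_k^{(\beta)}(n))$ with $\beta=2,3$, the paper does not get a single closed formula giving $C_k^{(\beta)}$ directly: it sets up a recursion in $k$ for the generating function of $F_k^{(\beta)}(t,\cdot)=\P(L_k^{(\beta)}(\cdot)\le t)$, and solves it using the a~priori bound $C_k^{(\beta)}\le 1/(k-1)$ (hence $C_k^{(\beta)}\to0$) as the boundary condition at $k=\infty$. Your ``the same mechanism reproduces $C_k^{(\beta)}/s^2$'' hides this; a direct tail computation is possible but messier, and in any case you need the decay in $k$ to close the argument. For the convergence $\E(L_k^{(\beta)}(n))/n\to C_k^{(\beta)}$ with $\beta=1,2$, the paper uses the exact identity $\E(L_k\1(n))=\sum_{j\le n}p_k\1(j)$ for $\beta=1$ and a subadditivity argument for $\sum_{j\le k}\E(L_j\2(\cdot))$ for $\beta=2$, in each case establishing convergence first and then identifying the limit via Hardy--Littlewood. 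Your ``control on the one-step increment'' is not by itself a Tauberian condition, and the parenthetical is inaccurate: at a non-record step the list $\mathcal A\2_{n,m}$ does not change at all, since $\tau_m$ is determined by the future. If what you have in mind is that $L_k^{(\beta)}(n)$ is pathwise non-decreasing in $n$ for $\beta=1,2$, that is true and, combined with the Ces\`aro asymptotic from Karamata, does yield $\E(L_k^{(\beta)}(n))\sim C_k^{(\beta)} n$; this monotonicity route is in fact simpler than the paper's subadditivity argument for $\beta=2$, but it must be stated and justified explicitly.
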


Table \ref{qkinfty} contains the first few numerical values of $p_k^{(\beta)}$ and $C_k^{(\beta)}$.
The constants $p_k\1$ first appeared in \cite{PY97} as the expected $k$th coordinate of the Poisson--Dirichlet distribution,
$p_k\2$ first appeared in \cite{S95} as the asymptotic probabilities that the last excursion is the $k$th longest lasting one in a standard Brownian motion, see also \cite{F08}.
The constants $C_k\2$ and $C_k\3$ were previously unknown in the literature except for $C_1\3$ which appeared in \cite{GMS14}.
$C_1\2=\infty$ since the expectation of holding times in the corresponding renewal process is infinite, see \eqref{qfasymp}.

\begin{table}
\centering
\begin{tabular}{|c|c|c|c|c|}
\hline
\textbf{k} & $p\1_k=C\1_k$ & $p\2_k$ & $C\2_k$ & $C\3_k$\\
\hline
1 & $0.62651\dots$ & $0.80031\dots$ & $\infty$ & $0.24174\dots$\\
\hline
2 & $0.14301\dots$ & $0.08125\dots$ & $0.18685\dots$ & $0.07999\dots$\\
\hline
3 & $0.06302\dots$ & $0.03342\dots$ & $0.07107\dots$ & $0.04105\dots$\\
\hline
4 & $0.03565\dots$ & $0.01846\dots$ & $0.03826\dots$ & $0.02528\dots$\\
\hline
5 & $0.02300\dots$ & $0.01178\dots$ & $0.02412\dots$ & $0.01724\dots$\\
\hline
6 & $0.01610\dots$ & $0.00819\dots$ & $0.01666\dots$ & $0.01255\dots$\\
\hline
\end{tabular}
\caption{The numerical values of $p\1_k,p\2_k$ and $C^{(\beta)}_k$ for $\beta=1,2,3$ for $1\leq k\leq 6$}\label{qkinfty}
\end{table}

\begin{remark}
The following heuristic argument explains why $\ln n/\sqrt n$ is the order of magnitude of $p_k\3(n)$.
Let $\lambda(n)$ be the random time of the last record until time $n$.
Since the rescaled spent waiting time converges in distribution to the arcsine law (see e.g.\ Section XIV.3 of \cite{F71}),
one has
\begin{equation}\label{lambdadistr}
\P(\lambda(n)=l)\asymp\frac1{\sqrt{ln}}
\end{equation}
for $l\le n/2$ as $n\to\infty$ where $\asymp$ means that the ratio of the two sides are bounded from below and from above by non-trivial constants.
On the other hand, conditionally given that there was a record at time $l$, the number of renewal intervals before $l$ equals $\asymp\sqrt l$,
and these intervals follow each other in a uniform random order.
Hence
\begin{equation}\label{lambdaconditional}
\P\left(\mbox{the last renewal interval in $[0,l]$ is the $k$th longest one}\bigm|\lambda(n)=l\right)\asymp\frac1{\sqrt l}
\end{equation}
for $l$ large enough.
By the law of total probability, if we sum the product of \eqref{lambdadistr} and \eqref{lambdaconditional} for $l$, then the result is asymptotically $\ln n/\sqrt n$.
\end{remark}

\subsection{Relation to the Chinese restaurant process}\label{ss:chinese}

The Chinese restaurant process is a two-parameter stochastic process that can be described as a seating arrangement in a restaurant, see \cite{P95} and \cite{P02}.
Let $\alpha\in[0,1]$ and $\theta>-\alpha$ denote the parameters and consider an initially empty restaurant with an infinite number of circular tables, each with infinite capacity.
The first customer is seated at the first table.
Suppose that $n_1, \dots, n_k$ people are sitting at the tables where $\sum_{i=1}^{k}n_i=n$.
Then the new customer chooses table $i$ for $1\leq i\leq k$ with probability $(n_i-\alpha)/(n+\theta)$, or sits to a new table with probability $(\theta+k\alpha)/(n+\theta)$.

Let $N_i(n)$ denote the $i$th largest element of the random seating arrangement $n_1,\dots,n_k$, that is the size of the $i$th largest table after the arrival of the $n$th customer.
Then the size-biased permutation of the limit of the frequencies
\begin{equation}
\left(\frac{N_1(n)}{n}, \frac{N_2(n)}{n}, \dots \right)
\end{equation} 
can be represented as $(\tau_1, \tau_2, \dots)$,
\begin{equation}
\tau_i=(1-U_i) \prod_{j=1}^{i-1} U_j \qquad i\geq 1
\end{equation}
where the sequence of $U_i$'s is independent with beta distribution of parameter $(\theta + i\alpha, 1-\alpha)$.
This limit is the Poisson--Dirichlet distribution with parameters $(\alpha,\theta)$ described first in~\cite{P90} for $\theta=0$.

\begin{lemma}\label{lemma:PD}
The distribution of the partition generated by the ages of the records in the first case which corresponds to \eqref{defA1}
has the same distribution as the Chinese restaurant process with parameters $(\alpha,\theta)=(1/2,0)$ at time step $n$.
\end{lemma}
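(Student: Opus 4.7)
The plan is to compute directly the probability of each partition $\lambda$ of $n$ under both models and verify they coincide by an elementary Sparre Andersen identity.

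First, for the random walk I would fix a partition $\lambda=(\lambda_1,\dots,\lambda_k)$ of $n$ with $k$ parts, writing $a_j$ for the multiplicity of the value $j$ in $\lambda$. The event that the multiset of positive entries of $\mathcal A\1_{n,R_n}$ equals $\lambda$ splits according to whether $A_n=0$ (in which case $R_n=k+1$ and the zero entry is suppressed) or $A_n\ge 1$ (so $R_n=k$). Using the i.i.d.\ structure of $(\tau_i)$ with $\P(\tau=s)=f(s)$ and $\P(\tau>s)=q(s)$, a specific ordered realization $(s_1,\dots,s_k)$ of $\lambda$ contributes $\prod_{i\le k}f(s_i)$ in the first case and $\prod_{i<k}f(s_i)\,q(s_k)$ in the second. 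Summing over the $k!/\prod_j a_j!$ distinct orderings of $\lambda$ and combining both cases would give
\[
\P(\text{partition}=\lambda)=\frac{(k-1)!}{\prod_j a_j!}\prod_{i=1}^k f(\lambda_i)\sum_{i=1}^k\frac{f(\lambda_i)+q(\lambda_i)}{f(\lambda_i)}.
\]

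The decisive simplification is the elementary identity
\[
q(s-1)=q(s)+f(s)=2s\,f(s),
\]
the first equality being \eqref{eq:fk} and the second following from \eqref{defqf} since both sides equal $\Gamma(s-1/2)/(\sqrt\pi\,(s-1)!)$. This would collapse the bracketed sum to $\sum_i 2\lambda_i=2n$, yielding the closed form
\[
\P(\text{partition}=\lambda)=\frac{2n\,(k-1)!}{\prod_j a_j!}\prod_{i=1}^k f(\lambda_i).
\]
Comparison with the Chinese restaurant side is then mechanical: the Ewens--Pitman partition formula for $(\alpha,\theta)=(1/2,0)$ assigns to $\lambda$ the probability
\[
\frac{n\,(k-1)!}{2^{k-1}\pi^{k/2}\prod_j a_j!}\prod_{i=1}^k\frac{\Gamma(\lambda_i-1/2)}{\lambda_i!},
\]
and substituting $f(\lambda_i)=\Gamma(\lambda_i-1/2)/(2\sqrt\pi\,\lambda_i!)$ from \eqref{defqf} into the random-walk expression reproduces exactly this formula.

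The hard part is the asymmetric role of $A_n$: it can vanish and its distribution involves the tail $q$ rather than $f$, so the two cases $A_n=0$ and $A_n\ge 1$ look structurally different from each other and from the symmetric Ewens--Pitman expression. The identity $q(s-1)=2s\,f(s)$, specific to the Sparre Andersen distribution, is precisely what restores the symmetry needed to force the match. A more structural alternative would be to invoke Pitman's size-biased characterization of exchangeable partitions and verify consistency step by step, but the direct calculation above appears to be the shortest route.
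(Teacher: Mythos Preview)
Your proof is correct and takes a genuinely different route from the paper's. The paper does not compute anything: it simply quotes Lemma~7 of Pitman~\cite{P97}, which identifies the discrete renewal partition with holding times \eqref{defqf} with the Brownian excursion partition of $\{1,\dots,n\}$, and then cites Section~4.5 of \cite{P02} (built on \cite{PY92,P95}) for the fact that the Brownian excursion partition is the $(1/2,0)$ Chinese restaurant process. So the paper's argument is a two-line concatenation of deep structural results.

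Your argument, by contrast, is a self-contained verification at the level of integer partitions. The split into the events $A_n=0$ and $A_n\ge1$ and the bookkeeping of orderings are handled correctly; in particular the Case~2 sum over which part plays the role of $A_n$ does yield $\frac{(k-1)!}{\prod_j a_j!}\prod_i f(\lambda_i)\sum_i q(\lambda_i)/f(\lambda_i)$, and combining with Case~1 gives your displayed formula. The identity $q(s-1)=2s\,f(s)$ is exactly right from \eqref{defqf} and is the crux that collapses the asymmetric-looking expression to the symmetric Ewens--Pitman form; your Gamma-function rewriting $f(s)=\Gamma(s-1/2)/(2\sqrt\pi\,s!)$ and the $(1/2,0)$ EPPF specialization
\[
p(n_1,\dots,n_k)=\frac{(1/2)^{k-1}(k-1)!}{(n-1)!}\prod_{i=1}^k(1/2)_{n_i-1}
\]
multiplied by the block-size multinomial $n!/(\prod_j a_j!\prod_i\lambda_i!)$ match your target exactly.

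What each approach buys: the paper's route is conceptually economical and situates the lemma inside Pitman's general theory of exchangeable partitions, but it imports substantial machinery as a black box. Your route is elementary and pinpoints precisely \emph{why} the Sparre Andersen law is special---the single algebraic identity $q(s-1)=2s f(s)$ is what makes the last-interval asymmetry disappear and forces exchangeability. It also makes the lemma independent of the Brownian-excursion intermediary, which is attractive given the discrete setting.
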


\begin{proof}
The equality in law can be seen in two steps.
According to Lemma 7 of \cite{P97}, the Brownian excursion partition up to $n$ has the same distribution as the discrete renewal process with holding time distribution \eqref{defqf}.
The Brownian excursion partition is a partition of integers obtained by the random equivalence relation $i\sim j$ iff
$U_i$ and $U_j$ fall in the same excursion interval of a standard Brownian motion $B$ away from $0$
where $U_i$ are uniformly distributed on $[0,1]$ independent of each other and of $B$.

On the other hand, by Section 4.5 of \cite{P02} based on \cite{PY92} and \cite{P95},
the Brownian excursion partition has the distribution of the Chinese restaurant process with parameters $(\alpha,\theta)=(1/2,0)$.
This finishes the proof.
\end{proof}

\begin{thm}\label{thm:PD}
Let us consider a random walk with arbitrary symmetric and continuous distribution of the jumps.
Let $(\tau_1, \tau_2, \dots, \tau_{m-1}, A_n)$ be the sequence of the ages of its records up to time $n$.
Then the relative sizes of the ranked ages of records jointly converge
\begin{equation}
\left(\frac{L^{(1)}_1(n)}{n}, \frac{L^{(1)}_2(n)}{n}, \dots \right)\stackrel{\d}{\longrightarrow} \PD(1/2, 0)
\end{equation}
in distribution in the vague sense.
\end{thm}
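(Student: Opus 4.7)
The plan is to reduce Theorem~\ref{thm:PD} directly to a classical limit theorem for the Chinese restaurant process. By Lemma~\ref{lemma:PD}, the unordered multiset of block sizes of the partition of $\{1,\dots,n\}$ induced by $\mathcal{A}\1_{n,m}=(\tau_1,\dots,\tau_{m-1},A_n)$ agrees in law with the vector of table occupancies $(N_1(n),N_2(n),\dots)$ of the CRP with parameters $(\alpha,\theta)=(1/2,0)$ at time $n$. Since the ranked ages $L_k\1(n)$ are measurable functions of the multiset of block sizes alone (the ordering of the $\tau_i$'s is irrelevant for ranking), the random sequence $(L_1\1(n),L_2\1(n),\dots)$ is equidistributed with the ranked table occupancies of the CRP at step $n$.

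Next I would invoke the classical Kingman--Pitman result (see \cite{PPY92,P93,P02}, as previewed in Subsection~\ref{ss:chinese}) stating that for the CRP with parameters $(\alpha,\theta)$, $\alpha\in(0,1)$, the rescaled ranked occupancies $(N_i(n)/n)_{i\ge 1}$ converge jointly in distribution to the Poisson--Dirichlet law $\PD(\alpha,\theta)$; the size-biased description in terms of the product of independent Beta factors recorded in Subsection~\ref{ss:chinese} is precisely the standard construction of this limit. Specialising to $(\alpha,\theta)=(1/2,0)$ and combining with the previous paragraph yields the stated convergence of $(L_1\1(n)/n,L_2\1(n)/n,\dots)$ to $\PD(1/2,0)$.

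The one technical point to pin down is the meaning of \emph{joint convergence in the vague sense} on the infinite-dimensional target. Because $(L_k\1(n)/n)_{k\ge 1}$ is nonincreasing and sums to~$1$, it lies in the compact simplex of nonnegative nonincreasing sequences with sum at most $1$; the vague topology on this simplex is generated by the finite-dimensional coordinate projections, so joint convergence in distribution reduces to convergence of every finite-dimensional marginal, which is exactly what the CRP limit theorem delivers. I do not anticipate a substantive obstacle: the main conceptual work has already been carried out in Lemma~\ref{lemma:PD}, and what remains is essentially a translation between the two equivalent descriptions of the same exchangeable partition together with citing the standard convergence result.
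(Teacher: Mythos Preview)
Your proposal is correct and matches the paper's approach: the paper does not write out a separate proof of Theorem~\ref{thm:PD}, treating it as an immediate consequence of Lemma~\ref{lemma:PD} together with the known convergence of the Chinese restaurant process $(N_i(n)/n)_{i\ge1}$ to $\PD(\alpha,\theta)$ recalled in Subsection~\ref{ss:chinese}. Your write-up is in fact more explicit than the paper, in particular the observation that the ranked ages depend only on the multiset of block sizes and the remark on the vague topology, both of which are left implicit there.
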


Similar results were already shown for Brownian motion \cite{PY97} and for the simple symmetric random walk \cite{CH03}:
the limiting distribution of ranked excursion lengths up to time $n$ converges in both cases to the Poisson--Dirichlet distribution with parameter $\alpha=1/2$.

\begin{remark}
It follows from Lemma~\ref{lemma:PD} that the size of the age of the last record up to $n$ is a size biased sample from the Chinese restaurant process.
Since the relative cluster sizes in the latter converge to $\PD(1/2,0)$, one can conclude that the probability $p_k\1(n)$ that the last record is the $k$th longest lasting one also converges.
The limit is the expectation of the $k$th coordinate in the $\PD(1/2,0)$ distribution which is exactly $p\1_k$, see Proposition 17 in~\cite{PY97}.
This also proves \eqref{L/nconvergence} for $\beta=1$.
\end{remark}

\subsection{Convergence of average limiting proportions}

The next two propositions imply that the knowledge on the Laplace transforms in Theorem~\ref{thm:main} can be turned into a proof of convergence of $n^{-1}\E(L_k^{(\beta)}(n))$ for $\beta=1,2$.

\begin{prop}\label{prop:cesaro}
For any fixed $n$, the following relation holds:
\begin{equation}
\E(L_k\1(n+1)) = \E(L_k\1(n)) + p_k\1(n),\label{Lprelation}
\end{equation}
or equivalently,
\begin{equation}
\E(L_k\1(n))=\sum_{j=0}^n p_k\1(j)\label{Lpsumrelation}
\end{equation}
with the convention $p_1\1(0)=1$ and $p_k\1(0)=0$ for $k=2,3,\dots$.
As a consequence, the equality of the constants $p\1_k=C\1_k$ for $k=1,2,\dots$ follows without using Theorem~\ref{thm:main}.
Furthermore, the asymptotic equivalence \eqref{pLaplace:asymp} implies the convergence \eqref{L/nconvergence} for $\beta=1$.
\end{prop}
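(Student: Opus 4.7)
The plan is to prove the pointwise recursion \eqref{Lprelation} by a pathwise comparison of $\mathcal{A}\1_{n,m}$ and $\mathcal{A}\1_{n+1,m'}$, and then extract both the identification $p_k\1=C_k\1$ and the Tauberian consequence for $\E L_k\1(n)/n$ from it.

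For \eqref{Lprelation} I would couple the two sequences on the same realization of the random walk. Going from step $n$ to step $n+1$ there are only two possibilities. Either no record is set, in which case $R_{n+1}=m$ and the multiset of ages is obtained from $\mathcal{A}\1_{n,m}$ by replacing the last entry $A_n$ by $A_n+1$. Or a record is set at time $n+1$, in which case $R_{n+1}=m+1$, the previous $A_n$ is promoted to the completed $\tau_m=A_n+1$, and a new last entry $A_{n+1}=0$ is appended. Appending a $0$ does not change the $k$th largest value for any $k$, so in both cases the only change relevant to the ranked sequence is the increment of one distinguished element (the ``last'' one) by $1$. The crucial combinatorial observation is that the entries are non-negative integers, so if the distinguished element has rank $\rho_n$ in the tie-broken sense $\rho_n-1=\#\{i<m:\tau_i>A_n\}$, then after the increment $A_n\mapsto A_n+1$ the sorted sequence is unchanged except that $L_{\rho_n}\1$ has grown by $1$; in particular $L_k\1(n+1)-L_k\1(n)=\I(\rho_n=k)$. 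Taking expectations and using $p_k\1(n)=\P(\rho_n=k)$ gives \eqref{Lprelation}, and telescoping from $\E L_k\1(0)=0$ with the stated initial values of $p_k\1(0)$ yields \eqref{Lpsumrelation}.

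To obtain $p_k\1=C_k\1$ without invoking Theorem~\ref{thm:main}, I would combine \eqref{Lpsumrelation} with Lemma~\ref{lemma:PD}. That lemma identifies the partition generated by $\mathcal{A}\1_{n,m}$ with the Chinese restaurant process of parameters $(1/2,0)$ at time $n$; in that process the cluster into which the last customer has been placed is, by construction, a size-biased pick from the cluster frequencies, which converge to $\PD(1/2,0)$. By Proposition~17 of \cite{PY97} the expectation of the $k$th ranked coordinate of $\PD(1/2,0)$ equals $p_k\1$, so $p_k\1(n)\to p_k\1$. Ces\`aro averaging \eqref{Lpsumrelation} then yields $\E L_k\1(n)/n\to p_k\1$, which, matched with the definition of $C_k\1$, gives $C_k\1=p_k\1$ without any appeal to the Laplace asymptotics.

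Finally, to deduce \eqref{L/nconvergence} for $\beta=1$ from \eqref{pLaplace:asymp}, I would apply Karamata's Tauberian theorem: since $p_k\1(n)\in[0,1]$ is non-negative, the Abelian asymptotic $\sum_n p_k\1(n)e^{-sn}\sim p_k\1/s$ as $s\to 0$ (equivalently as $x=e^{-s}\to 1^-$, using $1-x\sim s$) yields $\sum_{j\le n}p_k\1(j)\sim p_k\1\cdot n$, and then \eqref{Lpsumrelation} immediately gives $\E L_k\1(n)\sim p_k\1\cdot n$. The main technical care is required in the first step: one must keep track of ties and confirm that the incremented element really is the unique $A_n$ at each time (including when a new record appears and $A_n$ is reinterpreted as $\tau_m$); once the increment-by-$1$ observation is in place, the remainder is a Ces\`aro argument and a standard Tauberian step.
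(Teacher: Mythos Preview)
Your proposal is correct and follows essentially the same approach as the paper: the key observation that $L_k\1(n+1)-L_k\1(n)$ is the indicator that the last interval has rank $k$ is exactly what the paper uses (stated more tersely there), and your Tauberian step via Karamata is the Hardy--Littlewood argument the paper invokes. Your treatment is in fact more careful than the paper's on two points---the case split according to whether a record is created at time $n+1$ (and the harmless appended $0$), and the explicit tie-breaking convention $\rho_n-1=\#\{i<m:\tau_i>A_n\}$---and your route to $p_k\1=C_k\1$ via Lemma~\ref{lemma:PD} and Ces\`aro averaging is precisely the argument the paper sketches in the remark following Theorem~\ref{thm:PD}.
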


\begin{prop}\label{prop:subadditive}
For any fixed $k$, the sequence $\left(\sum_{j=1}^k \E(L_j\2(n))\right)_n$ is subadditive, that is,
\begin{equation}\label{L2subadditive}
\sum_{j=1}^k \E(L_j\2(n+m))\le\sum_{j=1}^k \E(L_j\2(n))+\sum_{j=1}^k \E(L_j\2(m)).
\end{equation}
Consequently, the proportions $\E(L_k\2(n))/n$ converge as $n\to\infty$.
\end{prop}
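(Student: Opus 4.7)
The plan is to establish \eqref{L2subadditive} by a regenerative decomposition of the $(n+m)$-walk at the first record time strictly after $n$, and then to deduce the convergence of the proportions via Fekete's subadditive lemma. Throughout, write $M_k(n):=\sum_{j=1}^k L_j\2(n)$ for the sum of the top $k$ ages in case $\beta=2$, so that $S_k(n):=\E M_k(n)$ is the quantity appearing in \eqref{L2subadditive}. A preliminary observation is that $M_k(n)$ is pointwise nondecreasing in $n$ on every realisation: as $n$ grows, the multiset $\{\tau_1,\dots,\tau_{R_n}\}$ can only acquire new elements, and appending an element to a finite list never decreases its top-$k$ sum.

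Fix $n,m\ge 1$ and set $N:=R_n$. The age sequence of the $(n+m)$-walk in case $\beta=2$ decomposes as the concatenation
\[
(\tau_1,\dots,\tau_{R_{n+m}})=(\tau_1,\dots,\tau_N)\oplus(\tau_{N+1},\dots,\tau_{R_{n+m}}).
\]
The first block coincides exactly with $\mathcal A\2_{n,R_n}$. The second block is the case-$\beta=2$ age sequence, observed up to time $m'':=n+m-t_{N+1}\le m$, of the renewal process with holding times $\tau_{N+1},\tau_{N+2},\dots$. Because $(\tau_i)_{i\ge 1}$ is i.i.d.\ and $N$ is a stopping time for the filtration it generates, the strong Markov property guarantees that this second piece is independent of the first block and equal in distribution to a genuine case-$\beta=2$ walk of length $m''$.

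The key pointwise inequality follows from the fact that the top-$k$ sum of a concatenation of two lists never exceeds the sum of their individual top-$k$ sums:
\[
M_k(n+m)\le M_k(n)+M_k^{(\mathrm{sec})}(m''),
\]
where $M_k^{(\mathrm{sec})}$ denotes the analogous quantity computed from the second piece. Applying the monotonicity from the first paragraph to the second piece and using $m''\le m$ gives $M_k^{(\mathrm{sec})}(m'')\le M_k^{(\mathrm{sec})}(m)$ pointwise. Taking expectations together with the distributional equality $\E M_k^{(\mathrm{sec})}(m)=\E M_k(m)=S_k(m)$ then produces the subadditivity \eqref{L2subadditive}.

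The convergence of $n^{-1}\E L_k\2(n)$ follows by Fekete's subadditive lemma applied to $S_k$, which yields the existence of $\lim_n S_k(n)/n$ in $[0,\infty]$ for every fixed $k$; the proportions $\E L_k\2(n)/n=S_k(n)/n-S_{k-1}(n)/n$ then converge by taking successive differences. The main obstacle I foresee is of a bookkeeping nature: one must verify carefully that $(\tau_{N+1},\tau_{N+2},\dots)$ inherits both the i.i.d.\ distribution and the independence from the first block through the strong Markov property, and then match the random length $m''$ with the deterministic bound $m$ via the pointwise monotonicity of $M_k$. These steps are routine but demand care because the last age $\tau_{R_{n+m}}$ may extend beyond $n+m$, so one has to keep straight exactly which $\tau_i$'s enter which block.
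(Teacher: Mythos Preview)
Your argument is correct. Both you and the paper split the list of ages near time $n$, use that the top-$k$ sum of a concatenation is dominated by the sum of the two individual top-$k$ sums, and conclude via Fekete's lemma. The difference lies only in where the split is made. The paper shifts the walk deterministically by $n$, setting $\wt X_l=X_{n+l}-X_n$; every record age of $X$ that starts after time $n$ is then also a record age of $\wt X$ starting no later than $m$, so each entry of $\mathcal A\2_{n+m}$ lies either in $\mathcal A\2_n$ or in $\wt{\mathcal A}\2_m$, and the second piece has deterministic horizon $m$ from the outset. You instead split at the random renewal time $t_{N+1}$, obtaining an exact identification of the second block with a fresh renewal process observed for a random duration $m''<m$, and then invoke the pointwise monotonicity of $M_k(\cdot)$ to replace $m''$ by $m$. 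The paper's route is marginally shorter because it avoids the stopping-time bookkeeping and the monotonicity step; your route is purely renewal-theoretic and never touches the random walk representation, so it would apply verbatim to any renewal sequence with the holding-time law \eqref{defqf}.
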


\begin{proof}[Proof of Proposition~\ref{prop:cesaro}]
Note that the age of the $k$th longest record can only increase from time $n$ to time $n+1$, if the last interval is the $k$th longest one at time $n$.
This observation means that the difference $L_k(n+1)-L_k(n)$ is a Bernoulli random variable with parameter $p_k\1(n)$ which proves \eqref{Lprelation}, then \eqref{Lpsumrelation} follows immediately.
To prove \eqref{L/nconvergence} for $\beta=1$, the Hardy--Littlewood Tauberian theorem can be used (see e.g.\ Theorem 2 in XIII.5 of \cite{F71}).
The theorem yields that \eqref{pLaplace:asymp} for $\beta=1$ implies that the right-hand side of \eqref{Lpsumrelation} is asymptotically equivalent to $np_k\1$,
i.e.\ \eqref{L/nconvergence} holds for $\beta=1$ with $C\1_k=p\1_k$.
\end{proof}

\begin{proof}[Proof of Proposition~\ref{prop:subadditive}]
We define the shifted random walk $\wt X_l=X_{n+l}-X_n$.
Note that if $r$ is the first record time of the random walk $X_l$ after time $n$, then $r-n$ is also a record time of the shifted random walk $\wt X_l$.
Furthermore, all the record times and ages of records for $X_l$ after time $r$ correspond to those of $\wt X_l$ after time $r-n$.

Let $\mathcal A_n\2$ denote the list of ages of records in the original random walk which started not later than $n$.
Similarly, let $\wt{\mathcal A}_m\2$ be the list of ages of records in the shifted random walk which started not later than $m$.
Now we consider the list $\mathcal A_{n+m}\2$.
All the ages of records which started not later than $n$ are contained in the list $\mathcal A_n\2$.
Ages of records in the original random walk started after time $n$ are contained in $\wt{\mathcal A}_m\2$ by the previous observation on the ages of records after time $r$.
Hence the $k$ largest elements of the list $\mathcal A_{n+m}\2$ can be found either in $\mathcal A_n\2$ or in $\wt{\mathcal A}_m\2$.
Since $\mathcal A_m\2$ and $\wt{\mathcal A}_m\2$ are equal in distribution, for the expectations, the subadditivity relation \eqref{L2subadditive} follows.
By the subadditive lemma, the sequence $\sum_{j=1}^k\E(L\2_j(n))/n$ converges for all $k$ as $n\to\infty$ which finishes the proof of the proposition.
\end{proof}

\subsection{Summation identities}

\begin{prop}\label{prop:sums}
The sum of the constants $p_k^{(\beta)}$ and $C_k^{(\beta)}$ are the following:
\begin{align}
\sum_{k=1}^\infty p_k\1=\sum_{k=1}^\infty p_k\2=\sum_{k=1}^\infty C_k\1&=1,\label{sumC1}\\
\sum_{k=2}^\infty C_k\2&=\frac1{2\sqrt\pi}\int_0^\infty\frac{x^{-1/2}\Gamma(-1/2,x)}{x^{-1/2}+\gamma(1/2,x)}\,\d x\simeq0.43067\dots,\label{sumC2}\\
\sum_{k=1}^\infty C_k\3&=\frac12.\label{sumC3}
\end{align}
\end{prop}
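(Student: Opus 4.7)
The common structure of the five constants suggests a geometric-series approach. With the shorthand $u(x) = x^{-1/2}e^{-x} + \gamma(1/2,x)$ and $v(x) = \Gamma(-1/2,x)$, each integrand in \eqref{defp1}--\eqref{defC3} is, up to a $k$-independent prefactor, a power of the ratio $v(x)/(2u(x))$ divided by $u(x)$. My plan is therefore to exchange sum and integral (justified by Tonelli since every integrand is nonnegative), sum the resulting geometric series, and evaluate the remaining one-dimensional integrals by elementary gamma-function identities.

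The key algebraic input is the identity
\[
2u(x) - v(x) = 2\sqrt{\pi}, \qquad x > 0,
\]
which follows from the recurrence $\Gamma(\nu+1,x) = \nu\Gamma(\nu,x) + x^{\nu}e^{-x}$ at $\nu = -1/2$ together with $\Gamma(1/2,x) + \gamma(1/2,x) = \sqrt{\pi}$. Its two useful consequences are that the geometric ratio $v(x)/(2u(x)) = 1 - \sqrt{\pi}/u(x)$ lies in $(0,1)$ for every $x > 0$ (since $u > \sqrt{\pi}$ throughout, with $u \to \sqrt{\pi}$ only as $x \to \infty$), so the series converge; and that after summation the denominator $2u - v$ collapses to the constant $2\sqrt{\pi}$, which is exactly what makes the remaining integrals tractable.

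Once the geometric series are summed and the identity applied, each stated equality reduces to one explicit one-dimensional integral. For $\sum_{k\ge 1} p_k\1$ the integrand collapses to $\pi^{-1/2} x^{-1/2} e^{-x}$, integrating to $1$ by $\Gamma(1/2)=\sqrt{\pi}$. For $\sum_{k\ge 1} p_k\2$ the integrand becomes $(2\sqrt{\pi})^{-1} x^{-3/2}(1-e^{-x})$; a single integration by parts, with both boundary terms vanishing by the asymptotics of $1-e^{-x}$ at $0$ and of $x^{-1/2}$ at $\infty$, reduces the integral to $\int_0^\infty 2 x^{-1/2} e^{-x}\,\d x = 2\sqrt{\pi}$, so the sum is $1$. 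For $\sum_{k\ge 1} C_k\3$ the integrand becomes $(2\sqrt{\pi})^{-1}\Gamma(-1/2,x)$; writing $\Gamma(-1/2,x) = \int_x^\infty e^{-t} t^{-3/2}\,\d t$ and swapping the order of integration by Fubini yields $\int_0^\infty e^{-t} t^{-1/2}\,\d t = \sqrt{\pi}$, so the sum equals $1/2$. The equality $\sum C_k\1 = 1$ is immediate from $C_k\1 = p_k\1$. For $\sum_{k\ge 2} C_k\2$ the same reduction applies, but since the series now starts at $k=2$ a factor $v(x)/(u(x)\cdot 2\sqrt{\pi})$ remains in the integrand after summation, giving directly the stated closed-form representation.

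The main obstacle is spotting the identity $2u - v = 2\sqrt{\pi}$; everything after that is routine. The technical conditions are mild: Tonelli justifies the exchange of sum and integral because every term is nonnegative, and integrability of each final integrand near $0$ and $\infty$ follows from $u(x) \sim x^{-1/2}$ and $v(x) \sim 2x^{-1/2}$ as $x \to 0$, together with the exponential decay of $v$ and boundedness of $u$ at infinity.
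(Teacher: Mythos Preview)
Your proposal is correct and follows essentially the same route as the paper: sum the geometric series under the integral sign (Tonelli) and then simplify using the identity $\Gamma(-1/2,x)=2x^{-1/2}e^{-x}+2\gamma(1/2,x)-2\sqrt\pi$, which is precisely your relation $2u-v=2\sqrt\pi$ in rearranged form (the paper records it as the integration-by-parts formula \eqref{intparts}). Your write-up is in fact more detailed than the paper's brief sketch; the only point to watch is that after summing \eqref{defC2} your integrand has $u(x)=x^{-1/2}e^{-x}+\gamma(1/2,x)$ in the denominator, so double-check the denominator in the displayed formula \eqref{sumC2}.
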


The proof of the summation identities \eqref{sumC1}--\eqref{sumC3} in Proposition~\ref{prop:sums} are straightforward
by summing the geometric series under the integral sign in \eqref{defp1}--\eqref{defC3} and by the integration by parts formula
\begin{equation}\label{intparts}\begin{aligned}
\Gamma(-1/2,x)&=2x^{-1/2}e^{-x}-2\Gamma(1/2,x)\\
&=2x^{-1/2}e^{-x}+2\gamma(1/2,x)-2\sqrt\pi
\end{aligned}\end{equation}
where the fact $\Gamma(1/2)=\sqrt\pi$ is used.

Next we explain why it is natural to expect the identities \eqref{sumC1} and \eqref{sumC3} to hold.
Clearly, for any finite $n$,
\[\sum_{k=1}^\infty p_k\1(n)=\sum_{k=1}^\infty p_k\2(n)=\sum_{k=1}^\infty\frac{\E(L_k\1(n))}n=1\]
holds, hence by Fatou's lemma, the sums in \eqref{sumC1} are at most $1$.
The reason for these sums not being less than $1$ is the following.
One expects an invariance principle for the ages of the records of the random walk $X_n$ to hold, i.e.\ ages of records of the walk should converge to the ages of records of Brownian motion.
Let us consider the trajectories of the random walk $X_n$ as continuous functions by linear interpolation.
By reflecting a trajectory of $X_n$ downwards off $0$ in the Skorokhod sense (see e.g.\ Section VI.2 in \cite{RY99}),
the record ages of the original trajectory correspond to the excursion lengths of the reflected trajectory (up to an error which is at most $1$).
Invariance principle for the excursions of the random walk was proved in \cite{CH04} for the simple random walk, but it is expected for other step distributions as well.
Provided that such an invariance principle is proved for the continuous and symmetric step distributions which are considered in the present paper,
then the equality in \eqref{sumC1} readily follows.

To explain \eqref{sumC3}, one should observe that
\begin{equation}\label{n/2}
\sum_{k=1}^\infty \E(L_k\3(n))=\frac n2
\end{equation}
holds for any $n$ since the left-hand side is the expected time of the last record before $n$ which is the expected time of the maximal value of the random walk $X_k$ for $0\le k\le n$.
By reverting the trajectory, the time of the maximal value of the random walk $\widehat X_k=X_{n-k}-X_n$ for $0\le k\le n$ and that of the original walk add up to $n$.
Since this time reversal preserves the measure, it explains \eqref{n/2}.
The same invariance principle for the ages of records of the random walk $X_n$ which was used in the explanation of \eqref{sumC1} could turn this argument into a proof of \eqref{sumC3}.

\section{Proof of Laplace transform asymptotics}\label{s:proof}

This section is devoted to the proof of Theorem~\ref{thm:main} which is the main result of the paper.
We start with an a priori bound on the constants $C_k^{(\beta)}$.

\begin{lemma}\label{lemma:Ckbound}
For any $\beta=1,2,3$ and for $k\ge2$, we have
\begin{equation}\label{Lkbound}
\frac{L_k^{(\beta)}(n)}n\le\frac1{k-1}
\end{equation}
almost surely.
As a consequence, we have
\begin{equation}\label{Ckbound}
C_k^{(\beta)}\le\frac1{k-1}.
\end{equation}
\end{lemma}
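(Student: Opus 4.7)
My plan is to prove the pointwise bound \eqref{Lkbound} by a direct counting argument based on the sum of the entries of $\mathcal A^{(\beta)}_{n,m}$, and then to deduce the bound \eqref{Ckbound} on the constants by a term-by-term comparison of Laplace transforms.

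First I would observe that in all three cases the completed record ages satisfy $\tau_1 + \cdots + \tau_{m-1} = t_m \le n$, since these renewal intervals have ended by time $n$. For $\beta = 1$, the last entry $A_n = n - t_m$ is added, so the entries of $\mathcal A^{(1)}_{n,m}$ sum exactly to $n$; for $\beta = 3$, no further entry is added and the sum is at most $n$. In these two cases, the $k$ largest entries are each bounded below by $L_k^{(\beta)}(n)$, so $k\, L_k^{(\beta)}(n) \le n$, which is in fact stronger than \eqref{Lkbound}. The only case needing a different treatment is $\beta = 2$, where the total sum $\tau_1 + \cdots + \tau_m = t_{m+1}$ may exceed $n$. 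I would handle this by removing $\tau_m$: regardless of whether $\tau_m$ itself lies among the top $k$ entries, at least $k-1$ of the values $\tau_1, \ldots, \tau_{m-1}$ must be bounded below by $L_k^{(2)}(n)$, and summing them yields $(k-1)\, L_k^{(2)}(n) \le \tau_1 + \cdots + \tau_{m-1} \le n$, which is exactly \eqref{Lkbound}.

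For the consequence \eqref{Ckbound}, taking expectations in \eqref{Lkbound} yields $\E(L_k^{(\beta)}(n)) \le n/(k-1)$, and hence
\[
\sum_{n \ge 0} \E(L_k^{(\beta)}(n)) e^{-sn} \le \frac{1}{k-1} \sum_{n \ge 0} n e^{-sn} = \frac{1}{k-1} \cdot \frac{e^{-s}}{(1 - e^{-s})^2} \sim \frac{1}{(k-1)\,s^2}
\]
as $s \to 0^+$. Comparing with the asymptotic \eqref{LLaplace:asymp} of Theorem~\ref{thm:main} gives $C_k^{(\beta)} \le 1/(k-1)$.

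I do not foresee any serious obstacle; the only subtle point is the case $\beta = 2$, where a naive sum bound overshoots by the last interval, so one must peel off $\tau_m$ before invoking $t_m \le n$.
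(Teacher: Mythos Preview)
Your proposal is correct and follows essentially the same approach as the paper. The paper phrases the first part uniformly for all three cases as ``at least $k-1$ of the $k$ longest intervals are in $[0,n]$'' (which is precisely your $\beta=2$ argument applied across the board), while you split into cases and obtain the slightly sharper bound $L_k^{(\beta)}(n)/n\le 1/k$ for $\beta=1,3$; the Laplace transform comparison for \eqref{Ckbound} is identical in both.
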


\begin{proof}
The bound \eqref{Lkbound} is clear, since at least $k-1$ of the $k$ longest intervals are in $[0,n]$.
Consequently,
\begin{equation}
\sum_{n\ge0}\E(L_k^{(\beta)}(n))e^{-sn}\le\frac1{k-1}\sum_{n\ge0} ne^{-sn}=\frac1{k-1}\frac{e^{-s}}{(1-e^{-s})^2}\sim\frac1{k-1}\frac1{s^2}
\end{equation}
which proves \eqref{Ckbound}.
\end{proof}

\begin{proof}[Proof of Theorem~\ref{thm:main}]
To compute $L^{(\beta)}_k(n)$ and $p^{(\beta)}_k(n)$, first we have to determine the joint distribution of $R_n$ and the sequence of ages up to time $n$.
Assume that $R_n=m$ and let $\bar{l}\1=(l_1, l_2, \dots, l_{m-1}, a)$, $\bar{l}\2=(l_1, l_2, \dots, l_m)$ and $\bar{l}\3=(l_1, l_2, \dots, l_{m-1})$
be the possible realizations of ages in the three different cases.
The probability of a realization is
\begin{equation}\label{eq:vsz}
\P(\bar{l}^{(\beta)}, n, m)=\P(\mathcal{A}^{(\beta)}_{n, m}=\bar{l}^{(\beta)}, R_n=m)
\end{equation}
which can be expressed with $q(k)$ and $f(k)$ of \eqref{defqf} in the cases $\beta=1,2,3$ as
\begin{align}
\P(\bar{l}\1, n, m) &= f(l_1) f(l_2) \dots f(l_{m-1}) q(a) \I\left(\sum_{k=1}^{m-1} l_k+a=n\right)\label{eq:vsz2}\\
\P(\bar{l}\2, n, m) &= f(l_1) f(l_2) \dots f(l_{m}) \I\left(\sum_{k=1}^{m-1} l_k < n \leq \sum_{k=1}^{m} l_k\right)\label{eq:vsz2II}\\
\P(\bar{l}\3, n, m) &= f(l_1) f(l_2) \dots f(l_{m-1}) \sum_{a\geq 0} q(a) \I\left(\sum_{k=1}^{m-1} l_k+a=n\right)\label{eq:vsz2III}
\end{align}
where $\I(\cdot)$ is the indicator function.
This explicit expression for the joint distribution in case $\beta=1$ first appeared in \cite{MZ08}.

\paragraph{Case $\beta=1$.}
Let $p\1_k(n, m)$ denote the probability that $R_n=m$ and the last element of $\mathcal A\1_{n,m}$ is the $k$st longest one of the sequence.
In this case, there are exactly $k-1$ longer intervals than $A_n$ among $\tau_1, \dots, \tau_{m-1}$.
This probability can be expressed as
\begin{equation}\begin{aligned}
p\1_k(n, m)&=\P(A_n=L\1_k(n), R_n=m)\\
&=\binom{m-1}{k-1} \sum_{a\geq 0} \sum_{l_1=a+1}^{\infty} \dots \sum_{l_{k-1}=a+1}^{\infty} \sum_{l_k=1}^{a} \dots \sum_{l_{m-1}=1}^{a} \P(\bar{l}\1, n, m).
\end{aligned}\end{equation}
By summing over all $m$, we get exactly $p\1_k(n)$, the probability that the last element of $\mathcal A\1_{n,m}$ is the $k$st longest one.
For its generating function,
\begin{equation}\label{p1genfunction}\begin{aligned}
\wt{p}\1_k(z) &= \sum_{n\geq 0} p\1_k(n) z^n\\
&= \sum_{n\geq 0} \sum_{m\geq 1} p\1_k(n, m) z^n\\
&= \sum_{m\geq 1}\sum_{j\geq 0} q(j) z^j \binom{m-1}{k-1} \left(\sum_{l=1}^{j}f(l) z^l\right)^{m-k} \left(\sum_{l=j+1}^{\infty} f(l) z^l\right)^{k-1}.
\end{aligned}\end{equation}

Next we apply the identity
\begin{equation}\label{binomsummation}
\sum_{m\geq 1} \binom{m-k}{k-1} x^{m-k}=\frac 1 {(1-x)^k}.
\end{equation}
The expression in \eqref{p1genfunction} for the generating function can be written as
\begin{equation}
\wt{p}\1_k(z) = \sum_{j\geq 0} \frac{q(j) z^j \left(\sum_{l=j+1}^{\infty} f(l) z^l\right)^{k-1}}{\left(1-\sum_{l=1}^{j}f(l) z^l\right)^k}.
\end{equation}
The following identity can be derived from \eqref{eq:fk}
\begin{equation}\label{eq:fqosszef}
1-\sum_{k=1}^{t} f(k) z^k = q(t) z^t + (1-z) \sum_{k=0}^{t-1} q(k) z^k.
\end{equation}
Using this identity, the denominator can be rewritten as
\begin{equation}
\wt{p}\1_k(z) = \sum_{j\geq 0} \frac{q(j) z^j \left(\sum_{l=j+1}^{\infty} f(l) z^l\right)^{k-1}}{\left(q(j) z^j + (1-z) \sum_{l=0}^{j-1} q(l) z^l\right)^k}.
\end{equation}

To prove \eqref{pLaplace:asymp} for $\beta=1$, one has to show
\begin{equation}\label{Laplaceconv}
s\wt p\1_k(e^{-s})= s\sum_{j\geq 0} \frac{q(j) e^{-sj} \left(\sum_{l=j+1}^{\infty} f(l) e^{-sl}\right)^{k-1}}{\left(q(j) e^{-sj} + (1-e^{-s}) \sum_{l=0}^{j-1} q(l) e^{-sl}\right)^k}\to p_k\1
\end{equation}
for any $k$ fixed as $s\to0$.
Using \eqref{qfasymp}, one can see that there is a $C>0$ such that for any $x>0$ large enough, the summand $j=x/s$ in \eqref{Laplaceconv} is bounded by $Ce^{-x}$ uniformly in $s$.
This bound can then be turned into a uniform exponential bound on the tail of the sum in \eqref{Laplaceconv}.
The uniform exponential bound justifies the exchange of limits and reduces \eqref{Laplaceconv} to
\begin{equation}\label{Laplace:doublelimit}
\lim_{K\to\infty}\lim_{s\to0}\sum_{j=0}^{K/s} \frac{sq(j) e^{-sj} \left(\sum_{l=j+1}^{\infty} f(l) e^{-sl}\right)^{k-1}}{\left(q(j) e^{-sj} + (1-e^{-s}) \sum_{l=0}^{j-1} q(l) e^{-sl}\right)^k}= p_k\1.
\end{equation}
where sum on the left-hand side is a Riemann sum for the integral in \eqref{defp1} on $[0,K]$ instead of $[0,\infty)$.
This verifies \eqref{pLaplace:asymp} for $\beta=1$.
The convergence $p_k\1(n)\to p_k\1$ follows from the remark after Theorem~\ref{thm:PD}.

For the proof of \eqref{LLaplace:asymp}, one can use Proposition~\ref{prop:cesaro} to get the relation
\begin{equation}
(1-z) \sum_{n\geq 0} \E(L\1_k(n)) z^n = z \sum_{n\geq 0} p\1_k(n) z^n.
\end{equation}
By substituting $z=e^{-s}$, one readily gets \eqref{LLaplace:asymp} for $\beta=1$ with $C_k\1=p_k\1$.
The convergence \eqref{L/nconvergence} for $\beta=1$ follows as a consequence of Proposition~\ref{prop:cesaro} and the Hardy--Littlewood Tauberian theorem.

\paragraph{Case $\beta=2$.}
With similar notations as in the previous case, we have the following formula
\begin{align*}
p\2_k(n, m)&=\P(\tau_m=L\2_k(n), R_n=m)\\
&=\binom{m-1}{k-1} \sum_{j\geq 1} \sum_{l_1=j+1}^{\infty} \dots \sum_{l_{k-1}=j+1}^{\infty} \sum_{l_k=1}^{j} \dots \sum_{l_{m-1}=1}^{j} \P(\bar{l}\2, n, m).
\end{align*}
Using \eqref{eq:vsz2II} and \eqref{binomsummation}, the generating function of the sequence $p_k\2(n)$ can be expressed as
\begin{align*}
\wt{p}\2_k(z) &= \sum_{m\geq 1}\sum_{j\geq 0} f(j) \frac{1-z^j}{1-z} \binom{m-1}{k-1} \left(\sum_{l=1}^{j}f(l) z^l\right)^{m-k} \left(\sum_{l=j+1}^{\infty} f(l) z^l\right)^{k-1}\\
&= \sum_{j\geq 0} \frac 1 {1-z} \left(\frac{f(j) (1-z^j) \left(\sum_{l=j+1}^{\infty} f(l) z^l\right)^{k-1}}{\left(1-\sum_{l=1}^{j}f(l) z^l\right)^k}\right).
\end{align*}
Following the method presented in the previous case, we similarly get \eqref{pLaplace:asymp} for $\beta=2$ with $p_k\2$ given in \eqref{defp2}.

However in the $\beta=2$ case, there is no simple relation of $\E(L_k^{(\beta)}(n))$ and $p_k^{(\beta)}(n)$ like \eqref{Lprelation} for $\beta=1$,
hence we cannot deduce \eqref{LLaplace:asymp} from \eqref{pLaplace:asymp}, but we proceed with direct computations.
By \eqref{qfasymp}, the expected total length of the last unfinished interval is infinite, therefore for $\beta=2$, one has $\E(L\2_1(n))=\infty$.
In order to compute the expected interval lengths for $k>1$, denote the distribution function of $L\2_k(n)$ by $F\2_k(t, n)$.
By the law of total probability, one can write
\begin{equation}
F\2_k(t, n)=\P(L\2_k(n)\leq t) = F\2_{k-1}(t, n) + \P(L\2_k(n)\leq t, L\2_{k-1}(n)> t).
\end{equation}
We decompose the distribution function depending on the number of renewal intervals until $n$ as
\begin{equation}\label{Fk2decomp}
F\2_k(t,n)=\sum_{m\geq 1} F\2_k(t, n, m)
\end{equation}
where
\begin{align*}
F\2_k(t, n, m) &= \P(L\2_k(n) \leq t, R_n=m)\\
&=F\2_{k-1}(t, n, m) + \P(L\2_k(n)\leq t, L\2_{k-1}(n) > t, R_n=m).
\end{align*}

In the last probability above, if the first $k-1$ longest lasting time intervals are greater than $t$, we can distinguish two different cases:
if these intervals are all among $\tau_1, \dots, \tau_{m-1}$, or if $\tau_m$ is one of these intervals.
Based on this observation, $F\2_k(t, n, m)$ can be rewritten as follows
\begin{equation}\label{Fk2mrecursion}\begin{aligned}
F\2_k(t, n, m)&= F\2_{k-1}(t, n, m)\\
&\qquad+ \binom{m-1}{k-1} \sum_{l_1=t+1}^{\infty} \dots \sum_{l_{k-1}=t+1}^{\infty} \sum_{l_{k}=1}^{t} \dots \sum_{l_{m-1}=1}^{t} \sum_{l_m=0}^{t} \P(\bar{l}\2, n, m)\\
&\qquad+ \binom{m-1}{k-2} \sum_{l_1=t+1}^{\infty} \dots \sum_{l_{k-2}=t+1}^{\infty} \sum_{l_{k-1}=1}^{t} \dots \sum_{l_{m-1}=1}^{t} \sum_{l_m=t+1}^{\infty} \P(\bar{l}\2, n, m).
\end{aligned}\end{equation}
The generating function
\begin{equation}
\wt{F}\2_k(t, z)= \sum_{n\geq 0} F\2_k(t, n) z^n
\end{equation}
can be decomposed based on \eqref{Fk2decomp}, and using the recursion \eqref{Fk2mrecursion} for each term and by substituting the value of $\P(\bar{l}\2, n, m)$ from \eqref{eq:vsz2II}, one obtains
\begin{equation}\label{eq:fkII}
\begin{aligned}
\wt{F}\2_k(t, z)= \wt{F}\2_{k-1}(t, z) &+ \frac 1 {1-z}\left(\frac{\left(\sum_{j=t+1}^{\infty}f(j)z^j\right)^{k-1}\sum_{j=1}^{t} f(j) (1-z^j)}{\left(1-\sum_{j=1}^{t} f(j) z^j\right)^k}\right)\\
&+\frac 1 {1-z} \left(\frac{\left(\sum_{j=t+1}^{\infty} f(j) z^j\right)^{k-2} \sum_{j=t+1}^{\infty} f(j) (1-z^j)}{\left(1-\sum_{j=1}^{t} f(j) z^j\right)^{k-1}}\right).
\end{aligned}
\end{equation}
By computing the expected value using the formula
\begin{equation}\label{expectedvalue}
\E(L\2_k(n)) = \sum_{t\geq 0} \left(1-F\2_k(t, n)\right)
\end{equation}
and by considering the generating functions of the two sides of \eqref{expectedvalue}, one can turn the recursion of generating functions \eqref{eq:fkII} into the asymptotic recursion of Laplace transforms
\begin{multline}
\sum_{n\geq 0} e^{-sn} \E(L\2_{k-1}(n)) \sim  \sum_{n\geq 0} e^{-sn} \E(L\2_k(n))\\
+\frac 1 {s^2}\int_0^{\infty} \left(\frac 1 {2^{k-1}} \frac{\int_x^{\infty} y^{-3/2} (1-e^{-y}) \,\d y \left(\Gamma(-1/2,x)\right)^{k-2}}{\left(x^{-1/2} e^{-x} + \gamma(1/2,x)\right)^{k-1}}\right.\\
+ \left. \frac 1 {2^k} \frac{ \int_0^{x} y^{-3/2} (1-e^{-y}) \,\d y \left(\Gamma(-1/2,x)\right)^{k-1}}{\left(x^{-1/2} e^{-x} + \gamma(1/2,x)\right)^k}\right) \,\d x
\end{multline}
as $s\to0$ for $k\ge3$ by following the method presented for $\beta=1$.
Simplifying the integral above and using the integration by parts \eqref{intparts}, one gets
\begin{equation}
\sum_{n\geq 0} e^{-sn} \E(L\2_{k-1}(n))
\sim\sum_{n\geq 0} e^{-sn} \E(L\2_k(n))+\frac 1 {s^2}\frac{\sqrt\pi}{2^{k-2}}\int_0^\infty\frac{x^{-1/2}\Gamma(-1/2,x)^{k-2}}{(x^{-1/2} e^{-x} + \gamma(1/2,x))^k}\,\d x
\end{equation}
for $k\ge3$.
Having a $k\to\infty$ decay bound on $L\2_k(n)$ by Lemma~\ref{lemma:Ckbound}, the asymptotic equivalence \eqref{LLaplace:asymp} follows for some sequence of constants $C_k\2$ which satisfies the recursion
\begin{equation}
C_{k-1}\2=C_k\2+\frac{\sqrt\pi}{2^{k-2}}\int_0^\infty\frac{x^{-1/2}\Gamma(-1/2,x)^{k-2}}{(x^{-1/2} e^{-x} + \gamma(1/2,x))^k}\,\d x
\end{equation}
for $k\ge3$.
The solution for this recursion with $\lim_{k\to\infty}C_k\2=0$ is exactly \eqref{defC2}.
Hence \eqref{LLaplace:asymp} follows for $\beta=2$.
The sequence $\E(L\2_k(n))/n$ converge as $n\to\infty$ by Proposition~\ref{prop:subadditive},
thus \eqref{LLaplace:asymp} together with the Hardy--Littlewood Tauberian theorem implies \eqref{L/nconvergence} for $\beta=2$.

\paragraph{Case $\beta=3$.}
With the same notations as before, we have
\begin{align*}
p\3_k(n, m)&=\P(\tau_m=L\3_k(n), R_n=m)\\
&=\binom{m-1}{k-1} \sum_{j\geq 1} \sum_{l_1=j+1}^{\infty} \dots \sum_{l_{k-1}=j+1}^{\infty} \sum_{l_k=1}^{j} \dots \sum_{l_{m-2}=1}^{j} \sum_{a=0}^{\infty} \P(\bar{l}\3, n, m)
\end{align*}
and by using \eqref{binomsummation} for the generating function,
\begin{equation}\label{p3tilde}
\wt{p}\3_k(z) = \sum_{n\ge0} p_k\3(n)z^n = \wt{q}(z) \sum_{j\geq 0} \frac{ f(j) z^j \left(\sum_{l=j+1}^{\infty} f(l) z^l\right)^{k-1}}{\left(1-\sum_{l=1}^{j}f(l) z^l\right)^k}
\end{equation}
where $\wt q(z)$ is given by \eqref{qfgenfunction}.
Again by \eqref{qfgenfunction}, we rewrite the denominator of \eqref{p3tilde} as
\begin{equation}\label{fsumidentity}
1-\sum_{l=1}^{j}f(l) z^l=\sqrt{1-z}+\sum_{l=j+1}^\infty f(l) z^l.
\end{equation}
Since $\wt q(e^{-s})\sim 1/\sqrt s$, it is enough for the proof of \eqref{pLaplace:asymp3} to show that
\begin{equation}\label{p3asymp}
\sum_{j\geq 0} \frac{ f(j) e^{-sj} \left(\sum_{l=j+1}^{\infty} f(l) e^{-sl}\right)^{k-1}}{\left(\sqrt{1-e^{-s}}-\sum_{l=j+1}^\infty f(l) e^{-sl}\right)^k}\sim\ln\frac1{\sqrt s}
\end{equation}
as $s\to0$.

To verify \eqref{p3asymp}, we fix an $\varepsilon>0$ small and we divide the sum into a sum for $j\le\varepsilon/s$ and a sum for $j>\varepsilon/s$.
For the sum for $j>\varepsilon/s$, we can use the argument which lead to \eqref{Laplace:doublelimit} for the $\beta=1$ case to derive a uniform exponential tail bound.
Hence the $j>\varepsilon/s$ part of the sum in \eqref{p3asymp} can be considered as a Riemann sum for the integral
\begin{equation}\label{intepsiloninfty}
\int_\varepsilon^\infty\frac{x^{-3/2}\Gamma(-1/2,x)^{k-1}}{(2\sqrt\pi+\Gamma(-1/2,x))^k}\,\d x.
\end{equation}
Since $\Gamma(-1/2,x)=(1+\O(x))2/\sqrt x$ for $x\to0$, the integrand above is equal to $(1+\O(\sqrt x))/(2x)$ for small $x$.
Therefore, the integral in \eqref{intepsiloninfty} is asymptotically $-\frac12\ln\varepsilon+\O(1)$ as $\varepsilon\to0$ where the error terms are uniform.

For the $j\le\varepsilon/s$ part of the sum, we again rewrite the left-hand side of \eqref{p3asymp} using \eqref{fsumidentity} and we consider the sum
\begin{equation}\label{p3asymplower}
\sum_{j=0}^{\varepsilon/s} \frac{ f(j) e^{-sj} \left(1-\sqrt{1-e^{-s}}-\sum_{l=1}^j f(l) e^{-sl}\right)^{k-1}}{\left(1-\sum_{l=1}^j f(l) e^{-sl}\right)^k}.
\end{equation}
Next we compare the two series $\sum_{l=1}^\infty f(l)=1$ and $\sum_{l=1}^\infty f(l)e^{-sl}=1-\sqrt{1-e^{-s}}$.
Note that both series have positive entries, and the second one is obtained from the first one by multiplying the summands by numbers from $(0,1)$.
Since the two sums differ by $\sqrt{1-e^{-s}}=\O(\sqrt s)$, this remains an upper bound on the difference of their partial sums, i.e.
\begin{equation}
\left|\sum_{l=1}^j f(l)e^{-sl}-\sum_{l=1}^j f(l)\right|\sim\left|\sum_{l=1}^j f(l)e^{-sl}-\left(1-\frac1{\sqrt{\pi j}}\right)\right|=\O(\sqrt s).
\end{equation}
Using this bound, the sum \eqref{p3asymplower} can be written as
\begin{equation}\label{p3asymplowerwitherror}
\sum_{j=0}^{\varepsilon/s} \frac{\frac1{2\sqrt\pi j^{3/2}}(1+\O(\varepsilon))\left(\frac1{\sqrt{\pi j}}+\O(\sqrt s)\right)^{k-1}}{\left(\frac1{\sqrt{\pi j}}+\O(\sqrt s)\right)^k}
\end{equation}
where the $\O(\varepsilon)$ and $\O(\sqrt s)$ errors are uniform.
Since $j\le\varepsilon/s$, we have $1/\sqrt j\ge\sqrt{s/\varepsilon}$,
hence the $\O(\sqrt s)$ errors are negligible in \eqref{p3asymplowerwitherror} if $\varepsilon$ is small enough.
Therefore, \eqref{p3asymplowerwitherror} is asymptotically equal to
\begin{equation}\label{harmonicseries}
\frac12(1+\O(\varepsilon))\sum_{j=1}^{\varepsilon/s}\frac1j\sim\frac12(1+\O(\varepsilon))(\ln\varepsilon-\ln s).
\end{equation}

We have shown that as $s\to0$, the left-hand side of \eqref{p3asymp} is asymptotically equal to the sum of the integral \eqref{intepsiloninfty} and \eqref{harmonicseries},
i.e.\ it is equal to $-\frac12(1+\O(\varepsilon))\ln s+\O(\varepsilon\ln\varepsilon)$.
Since $\varepsilon$ was arbitrary, \eqref{pLaplace:asymp3} is proved.

Following the same computation as in the $\beta=2$ case, the distribution function of $L\3_k(n)$ restricted to the event $R_n=m$ is
\begin{align*}
F\3_k(t, n, m) &= F\3_{k-1}(t, n, m) + \P(L\3_k(n)\leq t, L\3_{k-1}(n) > t, R_n=m)\\
&= F\3_{k-1}(t, n, m) + \binom{m-1}{k-1}
\sum_{l_1=t+1}^{\infty} \dots \sum_{l_{k-1}=t+1}^{\infty} \sum_{l_{k}=1}^{t} \dots \sum_{l_{m-1}=1}^{t} \sum_{a=0}^{\infty} \P(\bar{l}\3, n, m)
\end{align*}
and the generating function of $F_k\3(t,n)=\sum_{m\ge1}F_k\3(t,n,m)$ is
\begin{equation}\label{eq:fkIII}\begin{aligned}
\wt{F}\3_k(t, z)&= \sum_{n\geq 0} F\3_k(t, n) z^n\\
&=\sum_{n\geq 0} \sum_{m\geq 1} F\3_k(t, n, m) z^n\\
&= \wt{F}\3_{k-1}(t, n) + \wt{q}(z) \frac{\left(\sum_{j=t+1}^{\infty}f(j)z^j\right)^{k-1}}{\left(1-\sum_{j=1}^{t} f(j) z^j\right)^k}.
\end{aligned}\end{equation}

The generating function of $\E(L\3_k(n))$ can easily be expressed from the previous equation.
For $k\geq 2$, we get the asymptotic recursion
\begin{equation}
\sum_{n\geq 0}e^{-sn} \E(L\3_k(n)) \sim  \sum_{n\geq 0} e^{-sn} \E(L\3_{k-1}(n))
-\frac 1 {s^2}\int_0^{\infty} \frac{\sqrt\pi}{2^{k-1}} \frac{\Gamma(-1/2,x)^{k-1}}{\left(x^{-1/2} e^{-x} + \gamma(1/2,x)\right)^k} \,\d x.
\end{equation}
As for the case $\beta=2$, it readily implies a recursion for $C_k\3$ which is satisfied by the right-hand side \eqref{defC3}.
Further the limit of $C_k\3$ as $k\to\infty$ is $0$ by Lemma~\ref{lemma:Ckbound}, which is shown by a dominated convergence argument for the right-hand side of \eqref{defC3}.
This finishes the proof.
\end{proof}

\section{Conclusion}

The asymptotics for the Laplace transform of the probability that the last record is the $k$th longest lasting one
and for the Laplace transform of the expected proportion of the $k$th longest lasting record is understood by Theorem~\ref{thm:main}.
In some of the cases, these could rigorously be turned into the asymptotics for the sequences themselves by specific arguments.
The rigorous proof of the asymptotics in other cases however remains open.

\section*{Acknowledgements}
The authors thank Bal\'azs R\'ath for discussions related to this paper, in particular for the remark after Theorem~\ref{thm:main}.
They are grateful for the anonymous referees for their insightful suggestions and comments.
This work was supported by OTKA (Hungarian National Research Fund) grant K100473.
B.\ V.\ is grateful for the Postdoctoral Fellowship of the Hungarian Academy of Sciences and for the Bolyai Research Scholarship.

\end{document}